\newcommand{\e}{\varepsilon}
\newcommand{\Q}{\mathbb{Q}}
\newcommand{\R}{\mathbb{R}}
\newcommand{\Pro}{\mathbb{P}}
\newcommand{\Al}{\mathfrak{A}}
\newcommand{\norm}[1]{\|#1\|}
\newcommand{\dmu}{\mbox{ d}\mu}
\newcommand{\diam}{\text{diam}}
\newcommand{\RUC}{\ell\text{-RUC}(G)}
\newtheorem{theorem}{Theorem}[section]
\newtheorem{lemma}[theorem]{Lemma}
\newtheorem*{claim}{Claim}
\theoremstyle{definition}
\newtheorem{definition}[theorem]{Definition}
\newtheorem{example}[theorem]{Example}
\theoremstyle{remark}
\newtheorem{remark}[theorem]{Remark}
\numberwithin{equation}{section}
\begin{document}

\title{Uniform Continuity in Definable Groups}

\author{Alf Onshuus}
\address{Department of Mathematics, Universidad de los Andes, Bogot\'a, Colombia, Cra 1 No 18A-10}
\email{aonshuus@uniandes.edu.co}

\author{Luis Carlos Su\'arez}
\address{Department of Mathematics, Universidad de los Andes, Bogot\'a, Colombia, Cra 1 No 18A-10}
\email{lc.suarez2262@uniandes.edu.co}





\begin{abstract}
In this paper we study analogues of amenability for topological groups in the context of definable structures. We prove fixed point theorems for such groups. More importantly, we propose definitions for definable actions and continuous functions from definable groups to topological spaces which might prove useful in other contexts.
\end{abstract}

\maketitle

\section{Introduction}

In this paper we propose a definable version for the concept of topological amenable groups, and prove some analogues of fixed point theorems. This provides some evidence that the notion of $\sigma$-continuity (see Definition \ref{Def:Sigma}) can be quite useful when understanding relations between definable and topological structures.

In order to understand our main results, we begin with the relevant definitions.

\begin{definition}\label{Def:Sigma}
Let $X$ be a set, a family $\Sigma \subseteq \Pro(X)$ is called a \emph{$\sigma$-topology} if
\begin{enumerate}
    \item $\emptyset, X \in \Sigma$.
     \item $\Sigma$ is closed under finite intersections.
    \item $\Sigma$ is closed under \emph{countable} unions.
   
\end{enumerate}
Every set in $\Sigma$ is called \emph{$\sigma$-open} and the complement of a $\sigma$-open set is $\sigma$-closed.  If $X$ and $Y$ are a $\sigma$-topological spaces, a function $f : X \rightarrow Y$ is said to be \emph{$\sigma$-continuous} if preimage of open ($\sigma$-open) set is $\sigma$-open.
\end{definition}

With this definition we define the following.

\begin{definition}\label{definition:definition2}
Let $G$ be a definable group. 
\begin{itemize}
    \item A function from $G$ to $\mathbb R$ will be a \emph{product $\sigma$-continuous function} if the map from $G\times G$ to $\mathbb R$ is $\sigma$-continuous.
    
    \item If $X$ is a $\sigma$-topological space, a \emph{$\sigma$-continuous action} from $G$ to $X$ is a group action such that the map $G\times X$ to $X$ is $\sigma$-continuous.
\end{itemize}
\end{definition}

In the case of an $\aleph_1$-saturated group, it can be shown that the following statements are equivalent:
\begin{enumerate}
    \item Every $\sigma$-continuous affine action of $G$ on the $\sigma$-topology generated by a basis $\Al(X)$, the $\sigma$-algebra generated by the elements of the chosen basis, of a non-empty compact convex set in a locally convex vector space has a fixed point.
    \item $G$ admits a left-invariant mean over the product $\sigma$-continuous functions in $G$.
    \item For every non-empty compact Hausdorff space $X$ with a basis $\Al(X)$, and every $\sigma$-continuous action of on $X$ (with the $\sigma$-topology generated by $\Al(X)$), there is a $G$-invariant probability measure over  the $\sigma$-algebra generated by $\Al(X)$.
\end{enumerate}
However, it can be shown that provided that $G$ is $\aleph_1$-saturated, $K$ is compact, and $\cdot : G \times K \rightarrow K$ is a \emph{definable action} (see, for example, \cite{KrHrPi}), since for each $g$ the map $K \rightarrow K$, $x \mapsto g \cdot x$ is an homeomorphism, it follows, from results showed in \cite{KrHrPi}, that the action $\cdot$ has a fixed point. 

Having in mind the topological situation, specifically the case when $G$ is not compact or locally compact, our motivation was to try to find a `good' definition of right uniform continuity in definable groups in order to find an analogue characterization of Amenability in terms of fixed points, provided that $G$ is not $\aleph_1$-saturated. We propose the following definition:
\begin{definition} Let $G$ be a definable group and $f : G \rightarrow \R^d$ be a function, $f$ is said to be \emph{logically right uniformly continuous} if for each $\e>0$ there are $(\Theta_n^{\e})_{n<\omega}$ definable sets in $G$ such that $G \subseteq \bigcup_{n<\omega} \Theta_n^{\e}$ and if $g_1,g_2 \in \Theta_n^{\e}$ for some $n<\omega$, then $|f(g_1h)-f(g_2h)|<\e$, for each $h \in G$.
\end{definition}

It can be shown that, in $\aleph_1$-saturated groups, product $\sigma$-continuous functions satisfy the previous definition and therefore, in this case the logically right uniformly continuous functions are a subset of the product $\sigma$-continuous functions.

The main results of this paper are the following:

\begin{itemize}
    \item Let $\text{Mean}(G)$ be the space of means over logically right uniformly continuous functions from $G$ to $\mathbb R$, endowed with the $\sigma$-topology generated by the basic open sets of the weak-* topology
    \[\mathcal U_{f,V}:=\{F\in \text{Mean}(G) | F(f)\in V\}\] where $f$ is a product $\sigma$-continuous function from $G$ to $\mathbb R$ and $V$ is an open subset of $[0,1]$. 
    
    Then the natural action of $G$ on $\text{Mean}(G)$ is $\sigma$-continuous.
    
    \item If $G$ is a definable group of a first order theory over a countable language, then there is a $G$-invariant mean on $\text{Mean}(G)$ if and only if any $\sigma$-continuous action from $G$ into a $\sigma$-topology generated by a basis $\mathfrak{B}$ of a compact Hausdorff topological space $X$ admits a $G$-invariant measure on the $\sigma$-algebra generated by $\Al(X)$.
    \end{itemize}

\section{$\sigma$-topology}\label{SectionTopology}
In this section we will discuss the $\sigma$-topological spaces and some of its properties.

We will define convergence of nets and separation conditions as $T_1$, Hausdorff, Lindel\"{o}f for $\sigma$-topologies in an analogous way to the usual topological definitions.

\begin{remark}
Limits of convergent nets in Hausdorff $\sigma$-topological spaces are unique.
\end{remark}
\begin{proof}
Suppose that is not the case and let $x \neq y$ be different limits of the net. As the $\sigma$-topology of $X$ is Hausdorff, there are disjoint $\sigma$-open set $U, V$ containing $x$ and $y$ respectively. As $(x_\alpha)_{\alpha \in A}$ is a convergent net there are $\alpha_1, \alpha_2 \in A$ such that $x_\alpha \in U$ for every $\alpha \geq \alpha_1$ and $x_\alpha \in V$ for every $\alpha \geq \alpha_2$. As $A$ is a directed set, let $\alpha_3 \in A$ such that $\alpha_1 \leq \alpha_3$ and $\alpha_2 \leq \alpha_3$. Then for every $\alpha \geq \alpha_3$, $x_\alpha \in U \cap V = \emptyset$, a contradiction.
\end{proof}
\begin{example} Let $X$ be a set.
\begin{enumerate}
    \item Every topology over $X$ is a $\sigma$-topology.
    \item Every $\sigma$-algebra over $X$ is a $\sigma$-topology and therefore every measurable space is also a $\sigma$-space.
    \item If $\mathcal{F}$ is a family of subsets of $X$ and we denote $\Sigma(\mathcal{F})$ as the smallest $\sigma$-topology over $X$ containing $\mathcal{F}$, then $(X, \Sigma(\mathcal{F}))$ is a $\sigma$-topological space.
\end{enumerate}
\end{example}
\begin{definition}
Let $(X, \Sigma)$ be a $\sigma$-space. A set $K \subseteq X$ is said to be \emph{countably compact} if every countable covering by $\sigma$-open sets admits a finite subcovering.
\end{definition}
\begin{example}
Let $X$ be a set.
\begin{enumerate}
    \item Assume that there is a compact topology $\tau$ over $X$, then $X$ is countably compact.
    \item Assume that $X$ is an first order structure for some language $L$, $X$ can always be seen as a $\sigma$-topological space when is endowed with the $\sigma$-topology generated by its definable sets. With this $\sigma$-topology one has that $X$ is countably compact if and only if $X$ is $\aleph_1$-saturated.
\end{enumerate}
\end{example}
\begin{definition}
Let $X$ be a $\sigma$-topological space, $X$ is said to be \emph{$\sigma$-normal} if every pair of disjoint $\sigma$-closed sets can be separated by $\sigma$-open sets.
\end{definition}
\begin{theorem}[Urysohn's Lemma for $\sigma$-spaces]\label{theorem:uryshon} Let $X$ be a $\sigma$-normal space; let $A$ and $B$ be disjoint $\sigma$-closed subsets of $X$. Then there exists a $\sigma$-continuous function $f: X \rightarrow [0,1]$ such that $f(x)=0$ for every $x \in A$, and $f(x)=1$ for every $x \in B$.
\end{theorem}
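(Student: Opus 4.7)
The plan is to mimic the classical proof of Urysohn's Lemma, exploiting the fact that the standard construction uses only \emph{countably many} nested separating sets (one per dyadic rational in $[0,1]$), so the countable-union closure property of a $\sigma$-topology is exactly what one needs. The first ingredient is an interpolation lemma: if $F$ is $\sigma$-closed and $V$ is $\sigma$-open with $F \sset V$, then there exist a $\sigma$-open set $U$ and a $\sigma$-closed set $C$ with $F \sset U \sset C \sset V$. This is immediate from $\sigma$-normality applied to the disjoint $\sigma$-closed sets $F$ and $X \setminus V$: one obtains disjoint $\sigma$-open sets $U \supseteq F$ and $W \supseteq X \setminus V$, and then $C := X \setminus W$ does the job.

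Let $D$ denote the set of dyadic rationals in $[0,1]$, which is countable. By induction on the dyadic depth I will construct, for each $r \in D$, a $\sigma$-open set $U_r$ and a $\sigma$-closed set $C_r$ such that $U_r \sset C_r$, and $C_p \sset U_q$ whenever $p < q$ lie in $D$. I begin by setting $U_1 := X \setminus B$ and $C_1 := X$, and (using $\sigma$-normality on $A$ and $B$) choosing $U_0, C_0$ with $A \sset U_0 \sset C_0 \sset X \setminus B$. At each successive dyadic level the new index $r$ lies between two previously chosen consecutive dyadic indices $p < q$, and I apply the interpolation lemma to $F := C_p$ and $V := U_q$ to obtain the required pair $(U_r, C_r)$.

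Now define $f(x) := \inf\{r \in D : x \in U_r\}$, with the convention $\inf \emptyset = 1$. From $A \sset U_0$ one reads off $f(A) = \{0\}$; from $U_r \sset C_r \sset U_1 = X \setminus B$ for $r < 1$ (together with $B \cap U_1 = \emptyset$) one obtains $B \cap U_r = \emptyset$ for every $r \in D$ and hence $f(B) = \{1\}$. For $\sigma$-continuity it suffices to check preimages of subbasic open sets in $[0,1]$: on one hand,
\[
f^{-1}((-\infty, a)) \;=\; \bigcup_{r \in D,\, r < a} U_r,
\]
and on the other, using that $f(x) > a$ iff some dyadic $r > a$ satisfies $x \notin C_r$ (via the chain $U_s \sset C_s \sset U_r \sset C_r$ for $s < r$, together with picking a dyadic $q$ strictly between $a$ and $f(x)$ in the reverse direction),
\[
f^{-1}((a, \infty)) \;=\; \bigcup_{r \in D,\, r > a} (X \setminus C_r).
\]
Both right-hand sides are countable unions of $\sigma$-open sets, hence $\sigma$-open.

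The main delicacy is that the usual notion of closure is not well-behaved in a $\sigma$-topology, since arbitrary intersections of $\sigma$-closed sets need not be $\sigma$-closed; consequently I cannot substitute any ``$\overline{U_r}^{\sigma}$'' for $C_r$ and must instead carry an explicit $\sigma$-closed envelope alongside each $U_r$. The other crucial point is that $D$ is countable, which is what allows both the inductive construction to stay entirely within the $\sigma$-topology and the preimage set $f^{-1}((a,\infty))$ to be recognized as $\sigma$-open via a countable union; any attempt to replicate the classical proof with, say, real indices in $[0,1]$ would fail at precisely this step.
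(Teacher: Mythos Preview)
Your proof is correct and follows essentially the same route as the paper's: both carry out the standard Urysohn construction over a countable dense index set (you use dyadic rationals, the paper uses all of $\Q\cap[0,1]$ via an enumeration), and both replace the unavailable closure operator by an explicit $\sigma$-closed companion for each $U_r$ (your $C_r$ is exactly the paper's $V_r^c$). The verification of $\sigma$-continuity is likewise parallel---preimages are exhibited as countable unions of $\sigma$-open sets---with the countability of the index set being the decisive point in either version.
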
\begin{proof}
The proof is similar to the standard proof of Urysohn's Lemma, so we will look into the details which differ from the proof in, for example, \cite{Munk}.

Let $P$ be an enumeration of the rational numbers in $[0,1]$. Let $A, B$ be disjoint $\sigma$-closed subsets of $X$, as $X$ is $\sigma$-normal there are disjoint $\sigma$-open sets $U_0, V_0$ such that $A \subseteq U_0$ and $B \subseteq V_0$. As $B$ is $\sigma$-closed, the set $U_1:=B^c$ is $\sigma$-open. Note that $A \subseteq U_0$ and $U_0 \subseteq V_0^c \subseteq U_1$. 

As in the proof of Urysohn's Lemma, we will define $U_p$ and $V_p$ for every rational $p,q$ in $(0,1)$ with $p<q$, there are $\sigma$-open sets $U_p, V_p, U_q, V_q$ in $X$ such that $U_p, V_p$ and $U_q$, $V_q$ are disjoint and whenever $p<q$, we have $U_p \subseteq V_p^c \subseteq U_q$.  Fix an enumeration of the rational numbers in $(0,1)$, and let $p$ is the $n+1$-th rational, we will construct sets $U_p$ and $V_p$ inductively as follows. Let $P_n$ be the rational numbers we have listed until the $n$-th step, adding $0,1$ so that they are always contained. Let $q_0$ and $q_1$ be the predecessor and the successor of $p$ in $P_n$.

Since $V_{q_1}^c \subseteq U_{q_2}$, by $\sigma$-normality, the disjoint $\sigma$-closed sets $V_{q_1}^c$ and $U_{q_2}^c$ can be separated. So, let $U_p, V_p$ be disjoint $\sigma$-open sets such that $V_{q_1}^c \subseteq U_p$ and $U_{q_2} \subseteq V_p$. We define $U_p$ to be the whole $X$ if $p>1$ and 
as $\emptyset$ if $p<0$.

Now, for every $x \in X$, define
\begin{align}
    \Q(x):=\{p \in \Q : x \in U_p\}.
\end{align}
Note that, since for every $p<0$, $U_p = \emptyset$ and for every $p>1$, $U_p=X$, the following function is well defined:
\begin{align*}
    & f : X \rightarrow [0,1], \\ & f(x):=\inf \Q(x) = \inf\{ p \in \Q : x \in U_p \}.
\end{align*}

 By definition, $f(x)=0$ if $x \in A$, and $f(x)=1$ for $x\in B$. 
 
Notice that (1) if $x \in V_r^c$, then $f(x) \leq r$ and (2) if $x \notin U_r$, then $f(x) \geq r$. If $x \in V_r^c$, then $x \in U_s$ for each $s>r$ by construction of these sets, so $\Q(x)$ contains all rational numbers greater than $r$, so $f(x)=\inf \Q(x) \leq r$. On the other hand, if $x \notin U_r$, then $x \notin U_s$ for any $s<r$. Therefore, $\Q(x)$ contains no rational numbers less than $r$, so that $f(x)=\inf \Q(x) \geq r$.

Now, let's prove the $\sigma$-continuity of $f$. Consider the following basic open sets of $[0,1]$: $[0,a), (a,b), (b,1]$. For the middle set, let $x_0 \in X$ be and take rational numbers $p<q$ such that
\begin{align*}
    a<p<f(x_0)<q<b.
\end{align*}
We want to see that $U_{p,q}:=U_q \setminus V_p^c$ is a $\sigma$-open neighborhood of $x_0$ contained in $f^{-1}((a,b))$. As $f(x_0)<q_1$, by condition (2) in the previous paragraph we know that $x_0 \in U_q$, while the fact that $f(x_0)>p$ implies by (1) that $x_0 \notin V_p^c$, thus $x_0 \in U_q \setminus V_p^c$.

Let $x \in U_{p,q}$. Since $x \in U_q \subseteq V_q^c$ we have $f(x) \leq q$, by (1). On the other hand, $x \notin V_p^c$ so that $f(x) \geq p$ by (2). Thus $f(x) \in [p,q] \subseteq (a,b)$, so $U_{p,q} \subseteq f^{-1}((a,b))$.  By taking the union over all rational numbers $p<q$ in $(a,b)$, we get that $f^{-1}((a,b))=\bigcup_{p,q} U_{p<q}$, for $p,q \in (a,b) \cap \Q$, is $\sigma$-open. The proof for the basic sets $[0,a)$ and $(a,1]$ is analogue. Therefore, $f$ is $\sigma$-continuous.
\end{proof}

As we have Urysohn's Lemma for $\sigma$-spaces, we can now show an analogue version of Riesz Representation Theorem.
\begin{definition}
Let $X$ be a $\sigma$-space, we define
\begin{align}
    C_\sigma(X):=\{ f : X \rightarrow \R : f \text{ is $\sigma$-continuous}\}.
\end{align}
It is easy to see that if $X$ is countably compact, then every $f \in C_\sigma(X)$ is bounded. In fact, in \cite{FixpunktAlf} it is shown that $C_\sigma(X)$ is a Banach space, provided that $X$ is countably compact.
\end{definition}

\begin{remark}
Let $X$ be a countably compact $\sigma$-space and let $\Al(X)$ be the $\sigma$-algebra generated by the $\sigma$-open sets of $X$. If $I : C_\sigma(X) \rightarrow \R$ is a bounded\footnote{Bounded with respect to the norm which makes $C_\sigma(X)$ a Banach space.} positive linear functional, for every $\sigma$-closed set $K \subseteq X$ we define
\begin{align}\label{definition:measure}
    \mu_0(K):=\inf\{I(f) : f \in C_\sigma(X), f \geq \chi_K\}.
\end{align}
We want to see that $\mu_0$ can be extended to a measure over $\Al(X)$.
\end{remark}
\begin{remark}
Let $X$ be a $\sigma$-space and $K \subseteq X$ be countably compact. If $C \subseteq K$ is $\sigma$-closed, then $C$ is also countably compact. Indeed, let $(U_n)_{n<\omega}$ be a countable $\sigma$-open covering of $C$. As $C$ is $\sigma$-closed, $C^c$ is $\sigma$-open and therefore $(U_n \cup C^c)_{n<\omega}$ is a countable $\sigma$-open covering of $K$. As $K$ is countably compact, there are $U_1, \dots, U_n, C^c$ which covers $K$ and as clearly $C^c$ is disjoint from $C$ we conclude that $C \subseteq U_1 \cup \dots \cup U_n$, i.e, $C$ is countably compact.
\end{remark}
\begin{lemma}\label{lemma:lindelofcountablyisnormal}
Let $X$ be a countably compact, Hausdorff, Lindel\"{o}f space. Then $X$ is $\sigma$-normal.
\end{lemma}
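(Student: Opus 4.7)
The plan is to prove the lemma in two stages, mirroring the classical chain ``compact Hausdorff $\Rightarrow$ regular'' and ``regular $+$ Lindel\"of $\Rightarrow$ normal'', but adapted so that the only set-theoretic operations applied to $\sigma$-open sets are finite intersections and countable unions, which preserve $\sigma$-openness.

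First I would establish $\sigma$-regularity of $X$: given $x \in X$ and a $\sigma$-closed $F$ with $x \notin F$, the goal is to produce disjoint $\sigma$-open sets $U \ni x$ and $V \supseteq F$. For each $y \in F$, Hausdorffness provides disjoint $\sigma$-open $U_y \ni x$ and $V_y \ni y$, so $\{V_y\}_{y \in F}$ is a $\sigma$-open cover of $F$. The first reduction is to a countable subcover, which is possible because $\sigma$-closed subsets of Lindel\"of $\sigma$-spaces are Lindel\"of (adjoin $F^c$ to any cover, invoke Lindel\"ofness of $X$, then drop $F^c$). The second reduction, from the countable subcover to a finite one $V_{y_1},\dots,V_{y_k}$, uses that $F$ is countably compact, which is the remark immediately preceding the lemma. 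Then $U := U_{y_1} \cap \cdots \cap U_{y_k}$ is $\sigma$-open by finite intersection, $V := V_{y_1} \cup \cdots \cup V_{y_k}$ is $\sigma$-open by finite union, and by construction $x \in U$, $F \subseteq V$, $U \cap V = \emptyset$.

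Second, from $\sigma$-regularity and Lindel\"ofness I would deduce $\sigma$-normality by the standard diagonal-tail construction. Given disjoint $\sigma$-closed $A, B$, regularity assigns to each $a \in A$ disjoint $\sigma$-open $U_a, V_a$ with $a \in U_a$ and $B \subseteq V_a$; setting $F_a := V_a^c$, we have $U_a \subseteq F_a$, $F_a$ $\sigma$-closed, and $F_a \cap B = \emptyset$. Symmetrically for $B$ one obtains $W_b \subseteq G_b$ with $G_b \cap A = \emptyset$. Lindel\"ofness of $A, B$ produces countable subcovers $\{U_n\}$ of $A$ and $\{W_n\}$ of $B$ with corresponding $\sigma$-closed sets $F_n, G_n$. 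Define
\[
U_n' := U_n \cap \bigcap_{k \leq n} G_k^c, \qquad W_n' := W_n \cap \bigcap_{k \leq n} F_k^c;
\]
these are $\sigma$-open as finite intersections of $\sigma$-open sets, so $U := \bigcup_n U_n'$ and $W := \bigcup_n W_n'$ are $\sigma$-open. A direct check gives $A \subseteq U$ (any $a \in A$ lies in some $U_n$ and in no $G_k$), $B \subseteq W$ symmetrically, and $U \cap W = \emptyset$ (if $x \in U_n' \cap W_m'$ with $n \leq m$, then $x \in U_n \subseteq F_n$ but $x \notin F_n$, since $k = n \leq m$).

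The main obstacle is the first stage. In a $\sigma$-topology countable intersections of $\sigma$-open sets need not be $\sigma$-open, so one cannot simply intersect the countably many $U_{y_n}$ produced by Lindel\"ofness alone. This is the \emph{precise} role of countable compactness in the hypothesis: it allows a further reduction to a finite subcover, against which a finite intersection is safe. Once $\sigma$-regularity is in hand, the diagonal-tail construction only uses countable unions and finite intersections, so it transports directly from the classical topological setting to the $\sigma$-topological one.
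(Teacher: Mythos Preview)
Your proof is correct and follows the classical template the paper alludes to in its one-line argument. The only difference worth noting is in Stage~2: since Lindel\"of together with countable compactness makes every $\sigma$-open cover admit a \emph{finite} subcover, the paper's phrasing (``the result follows from the countable compactness of $X$'') suggests repeating the Stage~1 trick for normality as well---cover $A$ by the $U_a$, extract a finite subcover $U_{a_1},\dots,U_{a_n}$, and set $U=\bigcup_i U_{a_i}$, $V=\bigcap_i V_{a_i}$---thereby avoiding the diagonal-tail construction entirely. Your route through ``$\sigma$-regular $+$ Lindel\"of $\Rightarrow$ $\sigma$-normal'' is equally valid and has the mild advantage of isolating exactly where countable compactness is indispensable (only to secure regularity), but it is slightly longer than necessary given the hypotheses.
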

\begin{proof}
The proof is the same as for topological space by using the Lindel\"{o}f condition: as every open covering of $X$ admits a countable subcovering, the result follows from the countably compactness of $X$.
\end{proof}
\begin{lemma}\label{lemma:finitemeasure}
Let $X$ be a countably compact, Lindel\"{o}f, Hausdorff $\sigma$-space, $I:C_\sigma(X) \rightarrow \R$ be a bounded positive linear functional, and $\mu_0$ as in \eqref{definition:measure}. Then:
\begin{enumerate}
    \item For every $K, L$ $\sigma$-closed sets in $X$:
    \begin{enumerate}
        \item[(K1)] If $K \subseteq L$, then $0 \leq \mu_0(K) \leq \mu_0(L)<\infty$.
        \item[(K2)] $\mu_0(K \cup L) \leq \mu_0(K) + \mu_0(L)$.
        \item[(K3)] If $K$ and $L$ are disjoint, then the inequality in (K2) becomes an equality.
    \end{enumerate}
    \item If $K$ is $\sigma$-closed and $\e>0$, then there is a $\sigma$-open set $U$ in $X$ such that for every $\sigma$-closed set $L \subseteq U$, $\mu_0(L) \leq \mu_0(K)+\e$.
\end{enumerate}
\end{lemma}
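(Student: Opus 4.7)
The plan is to treat each clause in turn, using Urysohn's Lemma for $\sigma$-spaces (Theorem \ref{theorem:uryshon}) together with Lemma \ref{lemma:lindelofcountablyisnormal} for the subtler items.

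For (K1), the key observation is that any $f \in C_\sigma(X)$ with $f \geq \chi_K$ automatically satisfies $f \geq 0$ pointwise, so positivity of $I$ forces $\mu_0(K) \geq 0$. Monotonicity is immediate since $K \subseteq L$ implies $\chi_L \geq \chi_K$, so every competitor for $\mu_0(L)$ is a competitor for $\mu_0(K)$. Finiteness is witnessed by the constant function $1 \in C_\sigma(X)$, which dominates $\chi_L$ and satisfies $I(1) < \infty$ by boundedness of $I$. For (K2), the sum $f + g$ of two $\sigma$-continuous real-valued functions is again $\sigma$-continuous (because $\R^2$ is second countable, so the preimage under $(f,g)$ of any open set decomposes as a countable union of $\sigma$-open rectangles), and if $f \geq \chi_K$, $g \geq \chi_L$ then $f+g \geq \chi_{K \cup L}$; linearity of $I$ and passing to the infimum give subadditivity.

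For (K3), Lemma \ref{lemma:lindelofcountablyisnormal} guarantees that $X$ is $\sigma$-normal, so by Theorem \ref{theorem:uryshon} applied to the disjoint $\sigma$-closed sets $K, L$ we obtain $g \in C_\sigma(X)$ with $0 \leq g \leq 1$, $g|_K \equiv 0$ and $g|_L \equiv 1$. Given an arbitrary competitor $f \geq \chi_{K \cup L}$ for $\mu_0(K \cup L)$, decompose $f = f(1-g) + fg$: the summand $f(1-g)$ dominates $\chi_K$ and $fg$ dominates $\chi_L$, so linearity of $I$ yields $\mu_0(K) + \mu_0(L) \leq I(f(1-g)) + I(fg) = I(f)$. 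Taking the infimum over $f$ gives $\mu_0(K) + \mu_0(L) \leq \mu_0(K \cup L)$, which together with (K2) forces equality.

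For item (2), fix $f \in C_\sigma(X)$ with $f \geq \chi_K$ and $I(f) < \mu_0(K) + \e/2$, and choose $\alpha \in (0,1)$ close enough to $1$ that $(\mu_0(K) + \e/2)/\alpha < \mu_0(K) + \e$. Set $U := f^{-1}((\alpha, \infty))$, which is $\sigma$-open by $\sigma$-continuity of $f$ and contains $K$ because $f \geq 1 > \alpha$ there. For any $\sigma$-closed $L \subseteq U$ one has $f/\alpha > 1$ on $L$, so $f/\alpha \in C_\sigma(X)$ competes for $\mu_0(L)$, giving $\mu_0(L) \leq I(f)/\alpha < \mu_0(K) + \e$. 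The main technical obstacle is justifying that products and scalar multiples of $\sigma$-continuous functions remain $\sigma$-continuous; this rests on the continuity of $\R \times \R \to \R$ together with the second countability of $\R^2$, which ensures that $(f, g) : X \to \R^2$ is $\sigma$-continuous and hence so is any continuous composite such as $fg$ or $f(1-g)$.
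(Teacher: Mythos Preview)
Your proof is correct and follows essentially the same route as the paper: Urysohn's Lemma (via Lemma~\ref{lemma:lindelofcountablyisnormal}) to split a competitor $f \geq \chi_{K\cup L}$ as $f(1-g)+fg$ in (K3), and the level set $U=f^{-1}((\alpha,\infty))$ of a near-optimal competitor in~(2). The only differences are cosmetic---the paper orients the Urysohn function with $\varphi|_K=1$, $\varphi|_L=0$, and in~(2) parametrizes by $\delta$ with threshold $\frac{1}{1+\delta}$ (your $\alpha$)---and you are somewhat more explicit than the paper about why $C_\sigma(X)$ is closed under sums and products.
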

\begin{proof}
(K1) is easy since $I$ is positive and clearly $K \subseteq L$ implies that $\chi_K \leq \chi_L$. Thus, $0 \leq \mu_0(K) \leq \mu_0(L)$. Since $I$ is bounded and positive, $|I(1)| \leq \norm{I}$. As clearly $\chi_X$ is the constant $\sigma$-continuous function $1$, $\mu_0(X) = I(1) \leq \norm{I}<\infty$. The result for the countably compact set $L$ follows from the fact that $X$ is countably compact, $L \subseteq X$ and the previous lines.

For (K2) if we take $f,g \in C_\sigma(X)$ such that $f \geq \chi_K, g \geq \chi_L$, then $f+g \in C_\sigma(X)$, as $\sigma$-continuous functions forms a vector space, and $f+g \geq \chi_K+\chi_L\geq \chi_{K \cup L}$. Then $\mu_0(K \cup L) \leq I(f+g)=I(f)+I(g)$. If we take the infimum over all $f,g \in C_\sigma(X)$ satisfying the condition $f \geq \chi_K$, $g \geq \chi_L$ we conclude that $\mu_0(K \cup L) \leq \mu_0(K)+\mu_0(L)$.

For (K3) it only remains to show that $\mu_0(K\cup L) \geq \mu_0(K)+\mu_0(L)$ provided that $K \cap L=\emptyset$. Let $h \in C_\sigma(X)$ such that $h \geq \chi_{K \cup L}$. Now, choose $\varphi \in C_\sigma(X)$ such that $\varphi|_K=1, \varphi|_L=0$. The existence of this function is given by Urysohn's Lemma for $\sigma$-spaces, as every countably compact, Lindel\"{o}f, Hausdorff space is $\sigma$-normal (see Lemma \ref{lemma:lindelofcountablyisnormal}) and the fact that $K$ and $L$ are closed disjoint subsets of the $\sigma$-normal space $X$. Set $f_h:=\varphi h, g_h:=(1-\varphi)h$. Then $f_h, g_h \in C_\sigma(X)$, $f_h \geq \chi_K$, $g_h \geq \chi_L$, and $f_h+g_h=h$. Note that
\begin{align*}
    \mu_0(K \cup L)= & \inf\{I(h) : h \in C_\sigma(X), h \geq \chi_{K \cup L}\} \\
    & = \inf\{I(f_h)+I(g_h) : h \in C_\sigma(X), h \geq \chi_{K \cup L}\} \\
    & \geq \inf\{ I(f) : f \in C_\sigma(X), f \geq \chi_K \} + \inf\{ I(g) : g \in C_\sigma(X), g \geq \chi_K \} \\
    & \geq \mu_0(K)+\mu_0(L).
\end{align*}
Finally, for (2), let $K$ be $\sigma$-closed and $\e>0$. Fix $\delta>0$, then by definition of $\mu_0$ there is $f \in C_\sigma(X)$ such that $f \geq \chi_K$ and $\mu_0(K)+\delta \geq I(f)$. Since $f$ is $\sigma$-continuous the set $U_\delta:=f^{-1}\left(\frac{1}{1+\delta},\infty \right)=\left\{ x \in X : f(x) > \frac{1}{1+\delta}\right\}$ is $\sigma$-open. Note that $K \subseteq U_\delta$, as if $x \in K$, then $f(x)\geq \chi_K(x)=1>\frac{1}{1+\delta}$.
Since $\mu_0(K)<\infty$, choose $\delta$ small enough such that $\delta(\mu_0(K)+\delta+1)<\e$. Then, for every $L \subseteq U_\delta$ $\sigma$-closed we have that $(1+\delta)f \geq \chi_L$, as $L \subseteq U_\delta$, $(1+\delta)f \in C_\sigma(X)$, and
\begin{align*}
    \mu_0(L)& = \inf\{I(h) : h \in C_\sigma(X), h \leq \chi_L\} \\
    & \leq (1+\delta) I(f) \leq (1+\delta)(\mu_0(K)+\delta)\\
    & < \mu_0(K)+\e.
\end{align*}
\end{proof}
\begin{lemma}\label{lemma:extracondition}
Let $X$ be a countably compact, Lindel\"{o}f, Hausdorff $\sigma$-space, denote $\mathfrak{C}$ as the set of all $\sigma$-closed sets in $X$, and let $\mu_0 : \mathfrak{C} \rightarrow [0, \infty)$ be such that (K1)-(K3) and (2) of Lemma \ref{lemma:finitemeasure} hold. Then,
\begin{enumerate}
    \item[(3)] For each $K, L \in \mathfrak{C}, K \subseteq L$,
    \begin{align*}
        \mu_0(L)-\mu_0(K)= \inf\{\mu_0(C) : C \in \mathfrak{C}, C \subseteq L \setminus K\}
    \end{align*}
\end{enumerate}
\end{lemma}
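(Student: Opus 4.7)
The identity as literally written places the infimum on the right-hand side, but $\emptyset\in\mathfrak{C}$ satisfies $\emptyset\sset L\setminus K$ with $\mu_0(\emptyset)=0$, so that infimum is automatically $0$; this would force $\mu_0(L)=\mu_0(K)$ for every $K\sset L$, contradicting (K1). I therefore read (3) as the inner-regularity identity with $\sup$ in place of $\inf$, which is the statement naturally paired with condition (2) of Lemma \ref{lemma:finitemeasure} and needed to extend $\mu_0$ to a measure on $\mathfrak{A}(X)$:
\[
\mu_0(L)-\mu_0(K)=\sup\{\mu_0(C):C\in\mathfrak{C},\,C\sset L\setminus K\}.
\]

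The inequality ``$\sup\le$'' is immediate from (K1) and (K3): for any $\sigma$-closed $C\sset L\setminus K$, the sets $C$ and $K$ are disjoint and their union lies in $L$, hence $\mu_0(C)+\mu_0(K)=\mu_0(C\cup K)\le\mu_0(L)$.

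For the reverse inequality, fix $\e>0$ and apply condition (2) to $K$: there is a $\sigma$-open $U\supseteq K$ such that every $\sigma$-closed $L'\sset U$ has $\mu_0(L')\le\mu_0(K)+\e$. The candidate is $C:=L\cap U^{c}\in\mathfrak{C}$, which is automatically contained in $L\setminus K$. It suffices to produce a $\sigma$-closed $D\sset U$ with $L\sset D\cup C$: then (K1), (K2), and the choice of $U$ combine to give
\[
\mu_0(L)\le\mu_0(D\cup C)\le\mu_0(D)+\mu_0(C)\le\mu_0(K)+\e+\mu_0(C),
\]
whence $\mu_0(C)\ge\mu_0(L)-\mu_0(K)-\e$ and letting $\e\to 0$ yields the sup-bound.

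The main obstacle is the construction of such a $D$, since $L\cap U=L\setminus C$ is not $\sigma$-closed. My plan is to exploit $\sigma$-normality: by Lemma \ref{lemma:lindelofcountablyisnormal}, $X$ is $\sigma$-normal, and Theorem \ref{theorem:uryshon} applied to the disjoint $\sigma$-closed sets $K$ and $C$ supplies a $\sigma$-continuous $\varphi\colon X\to[0,1]$ with $\varphi|_K=0$ and $\varphi|_C=1$. The sublevel set $D:=\varphi^{-1}([0,t])$ is $\sigma$-closed for every $t\in(0,1)$, contains $K$, and is disjoint from $C$. The delicate point is arranging \emph{both} $D\sset U$ (so (2) applies to $D$) and $L\sset D\cup C$ (so (K2) closes the estimate). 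I would achieve this by rerunning the inductive construction of Theorem \ref{theorem:uryshon} with every auxiliary $\sigma$-open set $U_p$ built \emph{inside} $U$, so that $\varphi$ vanishes outside $U$ and the sublevel set $D$ automatically lies in $U$ while still separating $K$ from $C$.
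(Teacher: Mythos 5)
Your reading of the statement is right: with $\emptyset\in\mathfrak{C}$ and $\mu_0(\emptyset)=0$ (forced by (K3)), the infimum on the right-hand side is always $0$, so the intended condition is the one with $\sup$, exactly as in Elstrodt's Lemma VIII.2.3, which is all the paper itself invokes. Your ``$\sup\le$'' direction via (K1) and (K3) is correct. The gap is in the reverse direction, and it is concentrated in one decision: you fix $C:=L\cap U^{c}$ at the outset and then try to manufacture a $\sigma$-closed $D$ with $L\subseteq D\cup C$ and $D\subseteq U$. Since $L\setminus C=L\cap U$, this forces $L\cap U\subseteq D\subseteq U$, i.e.\ a $\sigma$-closed set squeezed between $L\cap U$ and $U$; such a set need not exist. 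Already in $X=[0,1]$ with $L=[0,1]$, $K=\{0\}$, $U=[0,\tfrac12)$ one would need $D=[0,\tfrac12)$, which is not closed. Your Urysohn sublevel set $D=\varphi^{-1}([0,t])$ does not repair this: it contains $K$ and avoids $C$, but a point $x\in L\cap U$ with $\varphi(x)>t$ lies in neither $D$ nor $C$, so $L\not\subseteq D\cup C$. The final suggestion is also internally inconsistent: if every $U_p$ is built inside $U$ so that $\varphi$ vanishes outside $U$, then $\varphi|_{C}=0$, not $1$, and the sublevel set $\varphi^{-1}([0,t])$ contains $U^{c}$ rather than being contained in $U$.

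The fix is to let $C$ depend on a further shrinking of $U$, which is where $\sigma$-normality actually enters (no Urysohn function is needed). Apply $\sigma$-normality to the disjoint $\sigma$-closed sets $K$ and $U^{c}$ to get disjoint $\sigma$-open sets $V\supseteq K$ and $W\supseteq U^{c}$; then $E:=W^{c}$ is $\sigma$-closed and
\[
K\ \subseteq\ V\ \subseteq\ E\ \subseteq\ U .
\]
Now take $D:=L\cap E$ and $C:=L\cap V^{c}$. Both are $\sigma$-closed (finite intersections of $\sigma$-closed sets are $\sigma$-closed, since finite unions of $\sigma$-open sets are $\sigma$-open), $D\subseteq U$ so $\mu_0(D)\le\mu_0(K)+\varepsilon$ by your choice of $U$ from condition (2), $C\subseteq L\setminus K$ because $V\supseteq K$, and $L=(L\cap V)\cup(L\cap V^{c})\subseteq D\cup C$. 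Your chain $\mu_0(L)\le\mu_0(D)+\mu_0(C)\le\mu_0(K)+\varepsilon+\mu_0(C)$ then closes the argument verbatim. This is the same interposition step as in Elstrodt's proof for locally compact spaces, with ``$K\subseteq V\subseteq\overline V\subseteq U$'' replaced by the $\sigma$-normal separation above, which is precisely the adaptation the paper alludes to when it says the result follows ``using the fact that $X$ is $\sigma$-normal.''
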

\begin{proof}
The result follows with the same arguments used in \cite{Elstrodt}, Lemma VIII.2.3, using the fact that $X$ is $\sigma$-normal.
\end{proof}
With Lemmas \ref{lemma:finitemeasure}, \ref{lemma:extracondition} now it is possible to show the following result:
\begin{theorem}[Extension Theorem]\label{theorem:extensionTheorem}
Let $X$ be a countably compact, Lindel\"{o}f, Hausdorff $\sigma$-space, $\mu_0 : \mathfrak{C} \rightarrow [0, \infty)$ such that (K1)-(K3) and (3) hold. Then there exists a unique measure $\mu : \Al(X) \rightarrow [0, \infty)$ such that $\mu|_{\mathfrak{C}}=\mu_0$ and for every $A \in \Al(X)$,
\begin{align}
    \mu(A)=\sup\{\mu(K) : K \subseteq A, K \in \mathfrak{C}\}.
\end{align}
\end{theorem}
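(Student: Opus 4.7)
My plan is to construct $\mu$ via a Carathéodory procedure adapted to the $\sigma$-topological setting, in direct analogy with the standard passage from a content on compact sets to a Radon measure. The hypothesis that makes the classical pattern go through here is $\sigma$-normality of $X$, provided by Lemma \ref{lemma:lindelofcountablyisnormal}.

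First I would extend $\mu_0$ to every $\sigma$-open $U\subseteq X$ by inner approximation,
$$\nu(U):=\sup\{\mu_0(K):K\in\mathfrak{C},\ K\subseteq U\},$$
which is finite and bounded above by $\mu_0(X)$. Then define an outer measure on $X$ by
$$\mu^*(A):=\inf\{\nu(U):A\subseteq U,\ U\text{ is $\sigma$-open}\}.$$
Monotonicity is immediate; the first real computation is countable subadditivity of $\nu$ on $\sigma$-open sets. For $U=\bigcup_n U_n$ and a $\sigma$-closed $K\subseteq U$, countable compactness of $K$ reduces the cover to a finite one $U_1,\dots,U_m$, and $\sigma$-normality together with Theorem \ref{theorem:uryshon} yields a decomposition $K=K_1\cup\cdots\cup K_m$ into $\sigma$-closed pieces with $K_i\subseteq U_i$; (K2)--(K3) then give $\mu_0(K)\leq\sum_i\mu_0(K_i)\leq\sum_i\nu(U_i)$. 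Countable subadditivity of $\mu^*$ transfers to arbitrary subsets via $\sigma$-open hulls, using that countable unions of $\sigma$-open sets are $\sigma$-open.

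The main obstacle is verifying Carathéodory's criterion: every $\sigma$-open $U$ is $\mu^*$-measurable. Given $A\subseteq X$ with $\mu^*(A)<\infty$ and $\e>0$, pick a $\sigma$-open $V\supseteq A$ with $\nu(V)\leq\mu^*(A)+\e$, and a $\sigma$-closed $K_1\subseteq V\cap U$ with $\mu_0(K_1)\geq\nu(V\cap U)-\e$. Since $V\setminus K_1=V\cap K_1^c$ is $\sigma$-open and contains $A\setminus U$, one can further pick a $\sigma$-closed $K_2\subseteq V\setminus K_1$ with $\mu_0(K_2)\geq\nu(V\setminus K_1)-\e\geq\mu^*(A\setminus U)-\e$. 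The sets $K_1,K_2$ are disjoint and contained in $V$, so (K3) and the definition of $\nu$ give
$$\nu(V)\geq\mu_0(K_1\cup K_2)=\mu_0(K_1)+\mu_0(K_2)\geq\mu^*(A\cap U)+\mu^*(A\setminus U)-2\e.$$
Letting $\e\to 0$ establishes measurability, so $\Al(X)$ lies in the $\mu^*$-measurable $\sigma$-algebra and $\mu:=\mu^*|_{\Al(X)}$ is a finite measure.

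To check $\mu|_{\mathfrak{C}}=\mu_0$, note that $\mu_0(K)\leq\mu(K)$ is immediate from the definition of $\nu$, while the reverse uses condition (2) of Lemma \ref{lemma:finitemeasure} to produce, for each $\e>0$, a $\sigma$-open $U\supseteq K$ such that every $\sigma$-closed $L\subseteq U$ satisfies $\mu_0(L)\leq\mu_0(K)+\e$; taking suprema gives $\nu(U)\leq\mu_0(K)+\e$, whence $\mu(K)\leq\mu_0(K)$. Inner regularity holds by construction on $\sigma$-open sets, and condition (3) of Lemma \ref{lemma:extracondition} is then the tool that transports $\sigma$-closed inner approximants from a $\sigma$-open hull $U\supseteq A$ down to approximants inside $A\in\Al(X)$, up to $\e$. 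Uniqueness is automatic: any other finite $\mu'$ on $\Al(X)$ extending $\mu_0$ and inner regular by $\mathfrak{C}$ must satisfy $\mu'(A)=\sup\{\mu_0(K):K\in\mathfrak{C},\ K\subseteq A\}=\mu(A)$ for every $A$.
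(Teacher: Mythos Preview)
Your approach---the Carath\'eodory construction via inner content on $\sigma$-closed sets, extended to $\sigma$-open sets, then to an outer measure---is exactly what the paper does: it simply cites Elstrodt, Satz VIII.2.4, noting that only measure-theoretic arguments on a set function satisfying (K1)--(K3) and (3) are needed, with (3) supplying $\sigma$-inner regularity and uniqueness following from it.

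One small slip: in checking $\mu(K)\leq\mu_0(K)$ you invoke condition (2) of Lemma \ref{lemma:finitemeasure}, but the theorem's hypotheses are only (K1)--(K3) and (3). You should instead use (3) with $L=X$: given $\e>0$, pick $C\in\mathfrak{C}$ with $C\subseteq X\setminus K$ and $\mu_0(C)\geq\mu_0(X)-\mu_0(K)-\e$; then $U:=X\setminus C$ is $\sigma$-open, contains $K$, and for any $\sigma$-closed $L'\subseteq U$ the sets $L',C$ are disjoint, so (K3) and (K1) give $\mu_0(L')\leq\mu_0(X)-\mu_0(C)\leq\mu_0(K)+\e$, whence $\nu(U)\leq\mu_0(K)+\e$ and $\mu(K)\leq\mu_0(K)$. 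With this correction your sketch matches the intended proof.
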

\begin{proof}
The proof of this result is similar to the Carath\'eodory's construction of the outer measure. It can be found, for example in \cite{Elstrodt}; Satz VIII.2.4, and it only uses measure theoretic arguments, provided that we have a set function $\mu_0$ satisfying the requirements listed in the statement of the Theorem. The $\sigma$-inner regularity of $\mu$ arises from (3) in Lemma \ref{lemma:extracondition} and the uniqueness arises from the fact that $\mu_0$ can be extended to an $\sigma$-inner regular measure over $\Al(X)$. Note that $\Al(X)$ is the $\sigma$-algebra generated by both $\sigma$-closed sets and $\sigma$-open sets, as the complement of a $\sigma$-open set is $\sigma$-closed.
\end{proof}
\begin{remark}\label{remark:sigmaouterregular}
The measure obtained in the Extension Theorem (Theorem \ref{theorem:extensionTheorem}) is also $\sigma$-outer regular in $\sigma$-closed sets.
\end{remark}
\begin{proof}
Let $C$ be $\sigma$-closed and take $U \supseteq C$ be $\sigma$-open. As $\mu$ is finite, $\mu(U)<\infty$. Let $\e>0$, by $\sigma$-inner regularity, there is a $\sigma$-closed set $L \subseteq U \setminus K$ such that $\mu(L) \geq \mu(U \setminus K)-\e$. Let $V:=U \setminus L$ be $\sigma$-open and note that
\begin{align*}
    \mu(V)=\mu(U)-\mu(L) \leq \mu(U)-\mu(U\setminus K)+\e = \mu(K)+\e.
\end{align*}
\end{proof}
\begin{theorem}[Riesz Representation Theorem for $\sigma$-spaces]\label{theorem:Riesz}
Let $X$ be a countably compact, Lindel\"{o}f, Hausdorff $\sigma$-space and $I: C_\sigma(X) \rightarrow \R$ be a bounded positive linear functional. Then there exists a unique measure $\mu : \Al(X) \rightarrow [0, \infty)$ such that
\begin{align}\label{eq:integralriesz}
    I(f) = \int_X f \text{ \emph{d}}\mu, \hspace{5mm} f \in C_\sigma(X).
\end{align}
Also, the following equalities hold:
\begin{align}
    \label{eq:Riesz1}& \forall K \subseteq X, K \in \mathfrak{C}, \hspace{5mm} \mu(K)=\inf\{I(f) : f \in C_\sigma(X), f \geq \chi_K\}, \\
    \label{eq:Riesz2}& \forall A \in \Al(X), \hspace{12mm}   \mu(A)=\sup\{\mu(K) : K \subseteq A, K \in \mathfrak{C}\}.
\end{align}
\end{theorem}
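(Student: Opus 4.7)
The plan is to follow the standard path of the Riesz Representation Theorem, using the measure-theoretic scaffolding already built. First, define $\mu_0 : \mathfrak{C} \to [0,\infty)$ by formula (\ref{definition:measure}). Lemma \ref{lemma:finitemeasure} gives (K1)--(K3) and (2), and Lemma \ref{lemma:extracondition} then gives (3); both rely on $\sigma$-normality (Lemma \ref{lemma:lindelofcountablyisnormal}) and Urysohn's Lemma (Theorem \ref{theorem:uryshon}). The Extension Theorem (Theorem \ref{theorem:extensionTheorem}) then produces a $\sigma$-inner regular measure $\mu$ on $\Al(X)$ extending $\mu_0$; equations (\ref{eq:Riesz1}) and (\ref{eq:Riesz2}) follow immediately.

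The substantive step is proving (\ref{eq:integralriesz}). Every $f \in C_\sigma(X)$ is $\Al(X)$-measurable (since $f^{-1}(\text{open}) \in \Sigma \subseteq \Al(X)$) and bounded, hence integrable. Observe also that $C_\sigma(X)$ is closed under $+$, $\vee$, $\wedge$, and scalar multiplication: for instance $(f \vee g)^{-1}((a,\infty)) = f^{-1}((a,\infty)) \cup g^{-1}((a,\infty))$. By writing $f = f^+ - f^-$ and rescaling, reduce to the case $0 \leq f \leq 1$. For each $n \geq 1$, set $A_i := \{f \geq i/n\}$, which is $\sigma$-closed, and
\[
f_i := (nf - (i-1))^+ \wedge 1 \in C_\sigma(X), \qquad i = 1, \ldots, n.
\]
A direct verification shows $\chi_{A_i} \leq f_i \leq \chi_{A_{i-1}}$ and $f = \tfrac{1}{n}\sum_{i=1}^n f_i$. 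The definition of $\mu_0$ gives $\mu(A_i) \leq I(f_i)$, while positivity of $I$ applied to $g - f_i \geq 0$ for any $g \in C_\sigma(X)$ with $g \geq \chi_{A_{i-1}}$ yields $I(f_i) \leq \mu(A_{i-1})$. Integrating $\chi_{A_i} \leq f_i \leq \chi_{A_{i-1}}$ against $\mu$ gives the same sandwich for $\int f_i \, d\mu$. Summing and dividing by $n$,
\[
\Bigl| I(f) - \int_X f \, d\mu \Bigr| \leq \frac{1}{n}\bigl(\mu(A_0) - \mu(A_n)\bigr) \leq \frac{\mu(X)}{n},
\]
which tends to $0$ as $n \to \infty$.

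Uniqueness is then straightforward: any measure satisfying (\ref{eq:Riesz1}) agrees with $\mu$ on the $\sigma$-closed sets, and any measure satisfying (\ref{eq:Riesz2}) is determined by its values on $\sigma$-closed sets via $\sigma$-inner regularity, so the measure is uniquely determined by the three conditions. The main technical obstacle I anticipate is the upper bound $I(f_i) \leq \mu(A_{i-1})$: this is not immediate from positivity of $I$ alone, and requires combining positivity with the characterization of $\mu(A_{i-1})$ as an infimum over $\sigma$-continuous majorants of $\chi_{A_{i-1}}$, since $\chi_{A_{i-1}}$ itself need not be $\sigma$-continuous, so the comparison must proceed through such majorants.
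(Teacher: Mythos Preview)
Your argument is correct, and the anticipated obstacle dissolves exactly as you say: since $f_i \leq \chi_{A_{i-1}}$, every $g \in C_\sigma(X)$ with $g \geq \chi_{A_{i-1}}$ automatically dominates $f_i$, so positivity gives $I(f_i) \leq I(g)$ and hence $I(f_i) \leq \mu_0(A_{i-1}) = \mu(A_{i-1})$.

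Your route to the integral formula differs from the paper's. The paper proceeds in two asymmetric steps: it first proves $I(f) \geq \int_X f \, d\mu$ for $f \geq 0$ by approximating from below with simple functions, replacing each $A_j$ by a $\sigma$-closed $K_j \subseteq A_j$ via $\sigma$-inner regularity, separating the $K_j$ by disjoint $\sigma$-open sets, and applying Urysohn's Lemma to manufacture $\varphi_j$ with $\chi_{K_j} \leq \varphi_j \leq \chi_{U_j}$; it then upgrades the inequality to an equality via a support argument that uses $\sigma$-outer regularity on $\sigma$-closed sets (Remark \ref{remark:sigmaouterregular}) and one more Urysohn function. Your layer-cake decomposition $f = \tfrac{1}{n}\sum f_i$ with $\chi_{A_i} \leq f_i \leq \chi_{A_{i-1}}$ sandwiches both $I(f)$ and $\int_X f \, d\mu$ between the same telescoping sums $\tfrac{1}{n}\sum \mu(A_i)$ and $\tfrac{1}{n}\sum \mu(A_{i-1})$, so the whole thing collapses in a single stroke without invoking Urysohn or $\sigma$-outer regularity at this stage (those ingredients are of course still used upstream, in constructing $\mu$). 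The trade-off is that the paper's approach is closer to the classical Riesz proof and makes the role of regularity explicit, while yours is shorter and uses only the bare definition of $\mu_0$ as an infimum.
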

\begin{proof}
Uniqueness of $\mu$ follows from \eqref{eq:Riesz2} and \eqref{eq:Riesz2} follows from Extension Theorem. To see \eqref{eq:Riesz1}, let $K \in \mathfrak{C}$ and $f \in C_\sigma(X)$ such that $f \geq \chi_X$. Then, by \eqref{eq:integralriesz}:
\[I(f)=\int_X f\dmu \geq \int_X \chi_K \dmu = \mu(K).\]
Thus we get that $\mu(K)$ is a lower bound for $\{I(f) : f \in C_\sigma(X), f \geq \chi_K\}$. Let $K \in \mathfrak{C}$ and $\e>0$ be. Since $\mu$ is $\sigma$-outer regular on $\sigma$-closed sets, we have a $\sigma$-open set $U \supseteq K$ such that $\mu(U) \leq \mu(K)+\e$. By Urysohn's Lemma for $\sigma$-spaces there is $f \in C_\sigma(X)$ such that $f|_K=1$ and $f|_{U^c}=0$; thus $\chi_K \leq f \leq \chi_{U}$. The later inequality along with \eqref{eq:integralriesz} implies that
\[I(f) = \int_X f\dmu \leq \int_X \chi_U \dmu = \mu(U) \leq \mu(K)+\e.\]
Now we shall see that $I(f) \geq \int_X f \dmu$, for each $f \in C_\sigma(X), f \geq 0$. For $A \subseteq X$, define the closure of $A$, $\overline{A}$, as the intersection of all $\sigma$-closed sets containing $A$. Note that $\overline{A}$ is not necessarily either $\sigma$-open or $\sigma$-closed as the $\sigma$-topology is only closed under countable set operations.

It suffices to show the statement for simple positive functions $u$ such that $u \leq f$, because $\sigma$-continuous functions are measurable in $\Al(X)$. Let $u=\sum_{j \leq m} \alpha_j \chi_{A_j}$ be a simple positive function such that $\alpha_1, \dots, \alpha_m>0$, $A_1, \dots, A_m \in \Al(X)$ are pairwise disjoint and $u \leq f$. Let $\e>0$ with $\e<\min\{\alpha_1, \dots, \alpha_m\}$. By $\sigma$-inner regularity of $\mu$, for each $j=1, \dots, m$, there are $\sigma$-closed sets $K_j$ such that $\mu(A_j) \leq \mu(K_j) + \e$. Since the $K_j$ are pairwise disjoint there $U_j \supseteq K_j$ pairwise disjoint $\sigma$-open neighborhoods. Without loss of generality, we may assume that $U_j \subseteq \{x \in X : f(x)>\alpha_j-\e\}$ as the latter set is $\sigma$-open and contains $K_j$.

By using Urysohn's Lemma for $\sigma$-spaces, for each $j=1, \dots, m$ we get $\varphi_j \in C_\sigma(X)$ such that $\chi_{K_j} \leq \varphi_j \leq \chi_{U_j}$. Let $g:=\sum_{j\leq m}(\alpha_j-\e)\varphi_j \in C_\sigma(X)$ and note that $0 \leq g \leq f$. Since $I$ is positive, we get that:
\begin{align*}
    I(f) & \geq I(g) =\sum_{j \leq m} (\alpha_j-\e)I(\varphi_j) \\ & \geq \sum_{j \leq m} (\alpha_j-\e)\mu(K_j) \\ & \geq \sum_{j \leq m} (\alpha_j-\e)(\mu(A_j)-\e) \\ & = \int_X u \dmu - \e \underbrace{\sum_{j \leq m}(\mu(A_j)\underbrace{-\e+\alpha_j}_{\geq 0})}_{\geq 0} \\ & \geq \int_X u \dmu.
\end{align*}

Now, let's check the equality for positive functions in $C_\sigma(X)$, this concludes the proof. Let $f \in C_\sigma(X)$, without loss of generality, assume that $0\leq f \leq 1$ (both $I$ and $\int_X \cdot \dmu$ are linear). Let $\text{supp } f := \overline{f^{-1}(\R \setminus \{0\})}$ be. By definition of the closure, there is a $\sigma$-closed sets $K$ such that $\text{supp }f \subseteq K$.
Let $\e>0$ be, by $\sigma$-outer regularity of $\mu$ on $\sigma$-closed sets (see Remark \ref{remark:sigmaouterregular}), we take a $\sigma$-open set $V \supseteq K$ such that $\mu(V) \leq \mu(K)+\e$. Note that $K$ and $V^c$ are disjoint $\sigma$-closed sets, thus by $\sigma$-normality there are $U, W$ disjoint $\sigma$-open neighborhoods of $K$ and $V^c$ respectively. As $V^c \subseteq W$, $W^c \subseteq V$ and being $W$ $\sigma$-open implies that $C:=W^c$ is $\sigma$-closed. Note that the following chain of continences holds: $\text{supp }f \subseteq K \subseteq U \subseteq C \subseteq V$.

By Urysohn's Lemma for $\sigma$-spaces choose $\varphi \in C_\sigma(X)$ such that $\varphi|_{K}=1$ and $\varphi|_{U^c}=0$. Note that $\varphi-f \geq 0$ and $\varphi - f \in C_\sigma(X)$. As $I(f) \geq \int_X f \dmu$,
\begin{align*}
    I(\varphi)-I(f)=I(\varphi-f) \geq \int_X (\varphi - f)\dmu = \int_X \varphi \dmu - \int_X f \dmu \geq 0.
\end{align*}
Therefore,
\begin{align}\label{eq:inequality2}
    0 \leq I(f)-\int_X f \dmu \leq I(\varphi)-\int_X \varphi \dmu.
\end{align}
It is easy to see that $\text{supp }\varphi \subseteq C$ as $\varphi|_{U^c}=0$ and $U \subseteq C$. Since $\varphi|_K=1$, $\varphi \geq 0$, $\int_X \varphi \dmu \geq \mu(K)$. Since $\text{supp }\varphi \subseteq C$, we get $0 \leq \varphi \leq \chi_C$. Thus, for any $\sigma$-continuous function $h$ such that $\chi_C \leq h$, $0 \leq \varphi \leq \chi_C \leq h$. As $I$ is positive, $I(\varphi) \leq I(h)$ and by taking the infimum over all such $h$ we get that $I(\varphi) \leq \mu(C)$. Then the inequality in \eqref{eq:inequality2} can be estimated in the following way:
\begin{align*}
    0 \leq I(f)-\int_X f \dmu \leq I(\varphi)-\int_X \varphi \dmu < \mu(C) - \mu(K) \leq \mu(V) - \mu(K)\leq \e.
\end{align*}
As $\e>0$ was arbitrary, we conclude that $I(f)-\int_X f \dmu =0$
\end{proof}

\begin{definition}
Let $G$ be a definable group and $X$ be a topological space, and fix a basis $\mathcal B$ for the topology of $X$. An action $\cdot : G \times X \rightarrow X$ is said to be \emph{$\sigma$-continuous for $\mathcal B$} if the action is a $\sigma$-continuous function when we give $G\times X$ the product $\sigma$-topology generated by the products of definable subsets of $X$ with elements of $\mathcal B$.

In other words, an action is $\sigma$-continuous for $\mathcal B$ if the preimage of an element in $\mathcal B$ is a countable union of cartesian products of definable subsets of $G$ and elements of $\mathcal B$.
\end{definition}

Once we have fixed $\mathcal B$, and if there are no grounds for confusion, we may not make it explicit.

\section{Logically right uniformly continuous functions}
\begin{definition}\label{definition:definition1}Let $G$ be a definable group and $f : G \rightarrow \R^d$ be a function, $f$ is said to be \emph{logically right uniformly continuous} if for each $\e>0$ there are $(\Theta_n^{\e})_{n<\omega}$ definable sets in $G$ such that $G \subseteq \bigcup_{n<\omega} \Theta_n^{\e}$ and if $g_1,g_2 \in \Theta_n^{\e}$ for some $n<\omega$, then $|f(g_1h)-f(g_2h)|<\e$, for each $h \in G$.
\end{definition}

\begin{lemma}
The logically right uniformly continuous functions form a vector space.
\end{lemma}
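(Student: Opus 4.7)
The plan is to verify the two vector space closure properties: scalar multiplication and addition. Constants and the zero function are trivially logically right uniformly continuous (take $\Theta_0^{\varepsilon}:=G$ for every $\varepsilon>0$), so it suffices to check these two operations.

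For scalar multiplication, given $f$ logically right uniformly continuous and $\alpha\in\R$ with $\alpha\neq 0$, I would fix $\varepsilon>0$ and apply the definition to $f$ with tolerance $\varepsilon/|\alpha|$. This produces definable sets $(\Theta_n^{\varepsilon/|\alpha|})_{n<\omega}$ covering $G$. Taking the same family for $\alpha f$ works, since for $g_1,g_2\in\Theta_n^{\varepsilon/|\alpha|}$ and any $h\in G$ we have $|\alpha f(g_1h)-\alpha f(g_2h)|=|\alpha|\cdot|f(g_1h)-f(g_2h)|<\varepsilon$. The case $\alpha=0$ is the constant zero function.

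For addition, given $f_1,f_2$ logically right uniformly continuous and $\varepsilon>0$, I would apply the definition with tolerance $\varepsilon/2$ to each function, obtaining families of definable sets $(\Theta_n^{\varepsilon/2,1})_{n<\omega}$ and $(\Theta_m^{\varepsilon/2,2})_{m<\omega}$, each covering $G$. I then form the countable family of pairwise intersections $\Xi_{(n,m)}:=\Theta_n^{\varepsilon/2,1}\cap\Theta_m^{\varepsilon/2,2}$, indexed by the countable set $\omega\times\omega$. Each $\Xi_{(n,m)}$ is definable as a finite intersection of definable sets, and the family still covers $G$ because $G=\bigcup_{n,m}\Xi_{(n,m)}$. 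Whenever $g_1,g_2\in\Xi_{(n,m)}$ and $h\in G$, the triangle inequality gives
\[|(f_1+f_2)(g_1h)-(f_1+f_2)(g_2h)|\leq |f_1(g_1h)-f_1(g_2h)|+|f_2(g_1h)-f_2(g_2h)|<\tfrac{\varepsilon}{2}+\tfrac{\varepsilon}{2}=\varepsilon,\]
showing that $f_1+f_2$ is logically right uniformly continuous.

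There is no real obstacle here; the only point worth noting is the bookkeeping that the index set needs to remain countable, which is why one uses the bijection between $\omega\times\omega$ and $\omega$ rather than a finer refinement. The proof is essentially the routine translation of the classical argument for uniform continuity in topological groups, with the countability requirement replacing the finite-intersection-of-neighborhoods step.
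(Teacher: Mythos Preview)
Your proof is correct and follows essentially the same approach as the paper: for addition you apply the definition with tolerance $\varepsilon/2$ to each summand, take the countable family of pairwise intersections of the resulting definable covers, and finish with the triangle inequality. If anything, your write-up is more complete, since the paper only treats the sum and leaves scalar multiplication (and the zero function) implicit.
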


\begin{proof}
Let $f, g$ be logically right uniformly continuous. Fix $\e>0$; since $f, g$ are logically right uniformly continuous, there are $\left(\Theta_i^{\frac{\e}{2},f}\right)_i, \left(\Theta_j^{\frac{\e}{2},g}\right)_j$ sequences of definable sets in $G$ such that $G \subseteq \bigcup_{i<\omega} \Theta_i^{\frac{\e}{2},f}, \text{ }\bigcup_{i<\omega} \Theta_i^{\frac{\e}{2},g}$ and if $x_1, x_2 \in \Theta_i^{\frac{\e}{2},f}, \Theta_j^{\frac{\e}{2},g}$, for each $h \in G$
\begin{align*}
    & |f(x_1h)-f(x_2h)|<\frac{\e}{2}, \\
    & |g(x_1h)-g(x_2h)|<\frac{\e}{2}.
\end{align*}
Define $\Theta_{i,j}^{\e, f+g}:= \bigcap_{i,j \text{  finite}}\Theta_i^{\frac{\e}{2},f}\cap \Theta_j^{\frac{\e}{2},g}$; clearly each $\Theta_{i,j}^{\e,f+g}$ is definable (finite intersection of definable sets). Now, let's check that $\bigcup_{i,j<\omega} \Theta_{i,j}^{\e,f+g}$ covers $G$. Let $g \in G$ be. Then there are $i,j<\omega$ such that $x \in \Theta_i^{\frac{\e}{2},f}\cap \Theta_j^{\frac{\e}{2},g}$. Thus $g$ is in some of these finite intersections and the countable union over all $i,j$ covers $G$. 

Take $x_1, x_2 \in \Theta_{i,j}^{\e,f+g}$. Then, $x_1, x_2$ are in the finite intersection of $\Theta_i^{\frac{\e}{2},f}, \Theta_j^{\frac{\e}{2},g}$. By definition of these sets, we have that for each $h \in G$, $|(f+g)(x_1h)-(f+g)(x_21h)|<\e$
\end{proof}

\begin{lemma}
The bounded logically right uniformly continuous functions form a Banach space.
\end{lemma}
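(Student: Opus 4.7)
The plan is to equip the space of bounded logically right uniformly continuous functions with the supremum norm $\|f\|_\infty := \sup_{g\in G}\abs{f(g)}$ and check completeness via an $\e/3$ argument, reusing the definable covers of a uniformly close approximant.

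First I would note that the previous lemma plus closure under scalar multiplication (immediate from the definition by replacing $\e$ with $\e/\abs{\lambda}$, or trivial if $\lambda=0$) shows that this set is a vector subspace of $\ell^\infty(G,\R^d)$. The sup norm makes it a normed space, so the only thing to verify is that it is closed in $\ell^\infty(G,\R^d)$; since $\ell^\infty(G,\R^d)$ is complete, this will yield that the space is Banach.

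To that end, let $(f_n)_{n<\omega}$ be a Cauchy sequence of bounded logically right uniformly continuous functions, and let $f\in\ell^\infty(G,\R^d)$ be its uniform limit. Fix $\e>0$. Choose $N$ with $\norm{f_N-f}_\infty<\e/3$. By hypothesis there is a countable family of definable sets $(\Theta_n^{\e/3})_{n<\omega}$ covering $G$ and such that $g_1,g_2\in\Theta_n^{\e/3}$ implies $\abs{f_N(g_1h)-f_N(g_2h)}<\e/3$ for every $h\in G$. The point is that the very same family witnesses logical right uniform continuity of $f$ at level $\e$: for any $g_1,g_2\in\Theta_n^{\e/3}$ and any $h\in G$, the triangle inequality
\begin{align*}
\abs{f(g_1h)-f(g_2h)} &\leq \abs{f(g_1h)-f_N(g_1h)}+\abs{f_N(g_1h)-f_N(g_2h)}\\
&\quad + \abs{f_N(g_2h)-f(g_2h)}<\tfrac{\e}{3}+\tfrac{\e}{3}+\tfrac{\e}{3}=\e
\end{align*}
gives the required bound, while the family $(\Theta_n^{\e/3})_{n<\omega}$ is already a countable cover of $G$ by definable sets.

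There is no real obstacle; the mild subtlety is only to realize that, thanks to the uniform character of the definition (the same $h$ is evaluated on both sides and the estimate is pointwise-uniform in $h$), one does not need to adjust the definable covering when passing to the uniform limit, so no compactness, saturation or measure-theoretic input is needed beyond the standard $\e/3$-argument.
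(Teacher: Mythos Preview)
Your proof is correct and follows essentially the same $\e/3$ argument as the paper: pick $N$ so that $\norm{f_N-f}_\infty<\e/3$, reuse the definable cover witnessing logical right uniform continuity of $f_N$ at level $\e/3$, and apply the triangle inequality. The only cosmetic differences are that you spell out the vector-space and closed-subspace framing more explicitly than the paper does.
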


\begin{proof}
Let $(f_n)_n$ be a sequence of bounded logically right uniformly continuous functions. Since $B(G)$ is a Banach space, there is a bounded function $f : G \rightarrow \R$ such that $\norm{f_n-f}_\infty \rightarrow 0$. Let $\e>0$ be. By uniform convergence, there is an $N<\omega$ such that $\norm{f_n - f}_\infty < \frac{\e}{2}$, for $n \geq N$. Since $f_N$ is logically right uniformly continuous, there is a sequence of defiable sets $(\Theta_j^{\frac{\e}{3},N})_j$ such that $G \subseteq \bigcup_{j<\omega}\Theta_j^{\frac{\e}{3},N}$ and if $x_1, x_2 \in \Theta_j^{\frac{\e}{3},N}$, then for each $h \in G$, $|f_N(x_1h)-f_N(x_2h)|<\frac{\e}{3}$. Take this definable covering of $G$ and let $x_1, x_2 \in \Theta_j^{\frac{\e}{3},N}$, for some $j<\omega$, and $h \in G$ then
\begin{align*}
    |f(x_1h)-f(x_2h)| \leq \underbrace{|f(x_1h)-f_N(x_1h)|}_{<\frac{\e}{3} \text{ by uniform convergence}}+\underbrace{|f_N(x_1h)-f_N(x_2h)|}_{<\frac{\e}{3}\text{ by choice of $N$}}+\underbrace{|f_N(x_2h)-f(x_2h)|}_{<\frac{\e}{3} \text{ by uniform convergence}}.
\end{align*}
Thus $f$ is logically right uniformly continuous, as we wanted to show.
\end{proof}

\begin{definition}
For $G$ a definable group, we define
\[\RUC:=\{f \in B(G) : f \text{ is bounded and logically right unformly continous}\}.\]

Let $\text{Mean}(G)$ be the set of means over $\RUC$. Since $\RUC$ is a Banach space, by Banach-Alaogulu's Theorem, $\text{Mean}(G)$ is compact in the weak*-topology of $(\RUC)'$.
\end{definition}

\begin{theorem}\label{theorem:continuityofaction}
If $G$ is a definable group, then the canonical action of $G$ into $\text{Mean}(G)$ is $\sigma$-continuous.
\end{theorem}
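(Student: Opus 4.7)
The plan is to unpack the canonical action and then show that preimages of the subbasic $\sigma$-opens $\mathcal{U}_{f,V}$ for $f \in \RUC$ and $V\subseteq \R$ open are $\sigma$-open in the product $\sigma$-topology on $G\times \text{Mean}(G)$. First I would write the action explicitly as $(g\cdot F)(f) := F(R_g f)$ where $R_g f(h):=f(gh)$, and check that $R_g f \in \RUC$ whenever $f \in \RUC$: if $(\Theta_n^{\e})_{n<\omega}$ is a definable cover witnessing logical right uniform continuity of $f$, then $(g^{-1}\Theta_n^{\e})_{n<\omega}$ is a definable cover (using $g$ as a parameter) witnessing the same for $R_g f$, because $x_1,x_2 \in g^{-1}\Theta_n^{\e}$ forces $gx_1,gx_2\in\Theta_n^{\e}$, so $|f(gx_1h)-f(gx_2h)|<\e$ for every $h$.

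Next, since any open $V\subseteq\R$ is a countable union of open intervals and the $\sigma$-topology on $\text{Mean}(G)$ is closed under countable unions, it suffices to handle $V=(a,b)$. Set
\[
A_{f,a,b}:=\{(g,F)\in G\times\text{Mean}(G):F(R_g f)\in(a,b)\}.
\]
The key observation is that for $g_1,g_2\in\Theta_n^{\e}$, one has $\norm{R_{g_1}f - R_{g_2}f}_\infty <\e$ directly from the definition of $\RUC$, so since every $F\in\text{Mean}(G)$ is positive with $F(1)=1$ and hence of norm $1$, we get $|F(R_{g_1}f)-F(R_{g_2}f)|\le \e$. Now for each $k,n<\omega$ pick any $g_{n,k}\in\Theta_n^{1/k}$ (omitting empty terms) and propose
\[
A_{f,a,b} \;=\; \bigcup_{k,n<\omega} \Theta_n^{1/k}\times \mathcal{U}_{R_{g_{n,k}}f,\,(a+1/k,\,b-1/k)}.
\]

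To verify the inclusion $\supseteq$, if $g\in\Theta_n^{1/k}$ and $F(R_{g_{n,k}}f)\in(a+1/k,b-1/k)$, the bound above gives $|F(R_g f)-F(R_{g_{n,k}}f)|\le 1/k$, so $F(R_g f)\in(a,b)$. For $\subseteq$, given $(g,F)\in A_{f,a,b}$, choose $k$ so large that $F(R_g f)\in(a+2/k,b-2/k)$ and then $n$ with $g\in\Theta_n^{1/k}$; the same bound forces $F(R_{g_{n,k}}f)\in(a+1/k,b-1/k)$. Since each $\Theta_n^{1/k}$ is definable in $G$, and each $\mathcal{U}_{R_{g_{n,k}}f,(a+1/k,b-1/k)}$ is a subbasic $\sigma$-open in $\text{Mean}(G)$ (legitimate because $R_{g_{n,k}}f\in\RUC$ by the first paragraph), the displayed equality exhibits $A_{f,a,b}$ as a countable union of products of basic opens in the product $\sigma$-topology, and hence as $\sigma$-open.

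The main obstacle is bookkeeping: the $\sigma$-topology only allows \emph{countable} unions, so one must avoid any appeal to an arbitrary pointwise neighborhood argument and instead present the preimage explicitly as an $\omega\times\omega$-indexed union. The $\RUC$ hypothesis is precisely what supplies the countable family $(\Theta_n^{1/k})_{k,n<\omega}$ needed, and pinning down $g_{n,k}$ in advance (rather than letting it vary with $F$) is what makes the decomposition work uniformly in the mean.
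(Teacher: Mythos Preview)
Your argument is correct and shares the paper's core idea: use the definable covers $\Theta_n^\e$ supplied by the $\RUC$ hypothesis on the $G$-factor, and use that means have operator norm $1$ to turn the uniform bound $\sup_h|f(g_1h)-f(g_2h)|\le\e$ into $|F(R_{g_1}f)-F(R_{g_2}f)|\le\e$ on the $\text{Mean}(G)$-factor. The execution, however, is genuinely more economical than the paper's. The paper proceeds ``locally'': for each $(g,F)$ in the preimage of $\mathcal U_{f,U}$ it manufactures a product neighbourhood $\Theta\times\mathcal W$, and then argues that all such neighbourhoods can be drawn from a single countable pool $\Xi\times\Upsilon$; building $\Upsilon$ requires first choosing a countable dense subset of the value set $\{F'({}_hf):h\in\Theta_i^\e,\ F'\in\mathcal F\}$ and then running several triangle-inequality steps. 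You instead reduce to an interval $(a,b)$, fix one representative $g_{n,k}\in\Theta_n^{1/k}$ per cell, and exhibit the preimage \emph{globally} as the explicit countable union
\[
\bigcup_{k,n<\omega}\Theta_n^{1/k}\times\mathcal U_{R_{g_{n,k}}f,\,(a+1/k,\,b-1/k)},
\]
which bypasses both the density step and the pointwise neighbourhood search. The price is the small preliminary check that $R_gf\in\RUC$ (so that the right-hand factors are legitimate subbasic $\sigma$-opens), which the paper does not need because its $\Upsilon$-sets are cut out by the fixed function $f$; the gain is a shorter, more transparent argument with a single clean $\e/2$-style estimate in each direction.
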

\begin{proof}
Let $U$ be any fixed open subset of $\mathbb R$, let $f \in \RUC$ be
fixed.

Let
\[
\mathcal U_{f,U}:= \{(h, F')\mid \ F'\in \mathcal F, h\in G, \
|F'(_h f)\in U\}
\]
be the basic open subset of $\mathcal F$ determined by $f$ and
$U$.

For each rational $\e$ fix a countable set of definable sets of $G$ given by definition logically right uniformly continuous functions, and let $\Xi$ be the set of all the
choices $\Theta^{\e}_i$ with $\e$ rational.

For each $\Theta^{\e}_i\in \Xi$, let $\{F_i(_{g_i}f)\}_{i\in
\omega}$ be a dense subset of
\[
\left\{ F'\left(_h f\right) \mid h\in \Theta_i^{\e}, \ F'\in
\mathcal F \right\},
\]
and let $\Upsilon^{\e}$ be the set of all open subsets of
$\mathcal F$ of the form
\[
\{F'\in \mathcal F \mid \ |F'(_{g_i} f)-F_i(_{g_i}f)|<q\}
\]
where $q$ varies over all rational numbers. Finally, let
$\Upsilon$ be the union of all the countable sets $\Upsilon^{\e}$
with rational $\e$. So $\Upsilon$ is countable.

\begin{claim}
It is enough to show that for any $F\in \mathcal F$ and any $g\in
G$ such that $_gF\in \mathcal U_{f,U}$, we can find some
$\Theta\in \Xi$ and some $\mathcal U\in \Upsilon$ such that $g\in
\Theta$, $F\in \mathcal U$ and $_hF'\in \mathcal U_{f,U}$ for all
$h\in \Theta$, $F'\in \mathcal U$.\end{claim}

\begin{proof}
It would follow from the hypothesis that
\[
\{(h, F')\mid _hF'\in \mathcal U_{f,U}\}
\]
is a union of sets of the form $\Theta\times \mathcal U$ in
$\Xi\times \Upsilon$. Since the latter is a countable set, we can
conclude that $\mathcal U_{f,U}$ is a countable union of the
cartesian product of definable sets and basic open subsets of
$\mathcal F$, as required.
\end{proof}

So let $F$ and $g$ be as in the statement of the previous claim.
Let $\epsilon$ be a rational number such that
$B_{2\e}(F(_gf))\subset U$.

Let $\Theta\in \Xi$ be a definable set such that $g\in \Theta$ and
$|f(g_1h)-f(g_2h)|<\e/2$ for any $g_1, g_2\in G$. It follows that
\begin{equation}\label{eqnAlf}
\forall F'\in \mathcal F, \left(\left(g_1, g_2\in G\right)\tag{*}
\Rightarrow
\left(\left|F'\left(_{g_1}f\right)-F'\left(_{g_2}f\right)\right|<\epsilon/2\right)\right).
\end{equation}

By density of $F_i, g_i$ in the construction of $\Upsilon$, let
$F_i, g_i$ be such that $g_i\in \Theta$ and
\[
|F_i(_{g_i}f)-F(_gf)|<\epsilon/2,
\]
and let $\mathcal W$ be the set in $\Upsilon$ defined by
\[
\{F'\in \mathcal F \mid \ |F'(_{g_i} f)-F_i(_{g_i}f)|<\epsilon\}.
\]

\bigskip

We will prove that $\Theta\times \mathcal W$ satisfies the
required conditions.

\medskip

First, notice that $$\begin{array}{llll}
|F(_gf)-F_i(_{g_i}f)|&<&\e/2 & \text{ by choice} \\
|F(_{g_i}f)-F(_gf)|&<&\e/2 & \text{ by \eqref{eqnAlf}}
\end{array}$$
so by triangular inequality
\[
|F(_{g_i}f)-F_i(_{g_i}f)|<\e
\]
so that $F\in \mathcal W$.

\medskip

Now, for any $h\in \Theta$ and $F'\in W$ we have that

$$\begin{array}{llll}
|F'(_hf)-F'(_{g_i}f)|&<&\e/2 &\text{ by \eqref{eqnAlf}} \\
|F'(_{g_i}f)-F_i(_{g_i}f)|&<&\e &\text{ by choice of $\mathcal W$}\\
|F'(_{h}f)-F_i(_{g_i}f)|&<&\e+\e/2 &\text{ by triangular
inequality}\\
|F_i(_{g_i}f)-F(_{g}f)|&<&\e/2 &\text{ by choice of $F_i$, $g_i$}\\
|F'(_{h}f)-F(_{g}f)|&<&2\e &\text{ by triangular inequality}.
\end{array}$$

So $F'(_hf)\in B_{2\e}(F(_gf)\subset U$, as required.
\end{proof}

\begin{theorem}\label{theorem:RUC}
Let $G$ be a definable group, $X$ be a compact space. Suppose that $G$ acts $\sigma$-continuously on $X$. Let $f : X \rightarrow \R$ be a $\sigma$-continuous function (with respect to the $\sigma$-topology which makes $\cdot$ $\sigma$-continuous). Then for each $\zeta \in X$ the mapping $g \mapsto f(g \cdot \zeta)$ is logically right uniformly continuous.
\end{theorem}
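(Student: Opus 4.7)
The plan is to prove the stronger uniform statement that $|f(g_1 \cdot y) - f(g_2 \cdot y)| < \e$ whenever $g_1, g_2 \in \Theta_n^{\e}$ and $y \in X$; specializing to $y = h \cdot \zeta$ then yields the lemma. The engine is the $\sigma$-continuity of the map $F : G \times X \to \R$ given by $F(g, x) := f(g \cdot x)$. Indeed, since $f$ is $\sigma$-continuous for the $\sigma$-topology generated by $\mathcal B$, for every open $O \subseteq \R$ one can write $f^{-1}(O) = \bigcup_k B_k$ with $B_k \in \mathcal B$, and then $F^{-1}(O) = \bigcup_k \alpha^{-1}(B_k)$ is a countable union of $\sigma$-opens in $G \times X$ by the $\sigma$-continuity of the action $\alpha$. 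Replacing $\mathcal B$ by the (still basic) family of its finite intersections, every $\sigma$-open subset of $G \times X$ becomes a countable union of ``rectangles'' $D \times C$ with $D \subseteq G$ definable and $C \in \mathcal B$.

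Fix $\e > 0$ and cover $\R$ by countably many open intervals $(I_n)_{n < \omega}$ of length less than $\e$ (e.g.\ with rational endpoints). For each $n$ write $F^{-1}(I_n) = \bigcup_{j < \omega} D_{n,j} \times C_{n,j}$ in the form above. Let $\mathcal S$ be the countable collection of finite subsets $S \subseteq \omega \times \omega$ such that $\{C_{n,j} : (n,j) \in S\}$ covers $X$, and for each $S \in \mathcal S$ set $\Theta_S := \bigcap_{(n,j) \in S} D_{n,j}$, a definable subset of $G$.

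To check that these cover $G$, fix $g \in G$: for every $y \in X$ the point $(g, y)$ lies in some rectangle $D_{n,j} \times C_{n,j}$, so $\{C_{n,j} : g \in D_{n,j}\}$ is a cover of $X$ by \emph{topologically} basic open sets, and compactness of $X$ produces a finite subcover $S \in \mathcal S$ with $g \in \Theta_S$. For the uniformity estimate: if $g_1, g_2 \in \Theta_S$ and $y \in X$, pick $(n, j) \in S$ with $y \in C_{n,j}$; then $(g_i, y) \in D_{n,j} \times C_{n,j} \subseteq F^{-1}(I_n)$ for $i = 1, 2$, so $f(g_1 \cdot y)$ and $f(g_2 \cdot y)$ both lie in $I_n$, whose length is less than $\e$. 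Enumerating $\mathcal S$ as $(S_n)_{n < \omega}$ yields the required sequence $(\Theta_n^{\e})_{n < \omega}$.

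The only real obstacle is the temptation to cover $X$ by the $f$-preimages of short intervals around each value $f(y)$ and then apply compactness directly to $X$; this fails because $f$ is only $\sigma$-continuous, so such preimages need not be topologically open. Decomposing $F^{-1}(I_n)$ into rectangles sidesteps this: the $X$-factors $C_{n,j}$ are genuine basis elements of the topology of $X$, so compactness of $X$ applies to them in the usual sense, and the finite intersections $\Theta_S$ package the resulting combinatorial data into a single definable subset of $G$ that simultaneously controls all right translates by $h \in G$.
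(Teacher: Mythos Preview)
Your proof is correct and follows essentially the same strategy as the paper: cover $\R$ by short intervals, pull back through the $\sigma$-continuous map $(g,x)\mapsto f(g\cdot x)$ to obtain countably many rectangles $D\times C$ with $D$ definable and $C$ basic open, use topological compactness of $X$ on the $C$-factors to extract finite subcovers, and take the corresponding finite intersections of the $D$-factors as the definable pieces $\Theta$. Your write-up is in fact tidier than the paper's on two points: you make the countability of the family $\{\Theta_S\}$ explicit via finite subsets of $\omega\times\omega$, and you state the estimate in the correct form $|f(g_1\cdot y)-f(g_2\cdot y)|<\e$ for a common $y$, whereas the paper's intermediate claim that $\diam(f(\Theta_x\cdot X))<\e$ is overstated (though its final conclusion is the right one).
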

Fix $\e>0$. Let $(V_N)_N$ be a countable covering of $\{f(x\cdot \zeta) : x \in G, \zeta \in X\} \subseteq \R$ such that $\diam(V_N)<\e$. Since $f$ is $\sigma$-continuous and the action is also $\sigma$-continuous, for each $N<\omega$ 
\[\cdot^{-1}(f^{-1}(V_N))=\bigcup_{i<\omega} \Phi^N_i \times U^N_i\]
for $\Phi_i^N$ definable in $G$ and $U^N_i$ $\sigma$-open in $X$.

Clearly, for each $x \in G$ and for each $\zeta \in X$ there is $N<\omega$ such that $f(x\zeta) \in V_N$. Therefore, there exists $i<\omega$ such that $f(\Phi_i^N\cdot U_i^N) \subseteq V_n$. This in particular means that $\diam(f(\Phi_i^N\cdot U_i^N))<\e$. Therefore, $(x, \zeta) \in \Phi^{x,\zeta} \times U^{x, \zeta}$, where $\Phi^{x,\zeta} \times U^{x, \zeta}$ is one of these boxes (we know that at least there is one).

Note that $(U^{x,\zeta})_{\zeta \in X}$ is an open covering of $X$ (each $U^{x,\zeta}$ is the countable union of open basic sets in $X$ and therefore open). Being $X$ compact this means that there are $\zeta_1, \dots, \zeta_k$ such that $X \subseteq \bigcup_{i \leq k} U^{x,\zeta_i}$.

Associated to this finite sequence of $U^{x,\zeta_i}$ we have a finite family of definable sets $\Phi^{x,\zeta_i}$ all of them containing $x$. Note that: for each $i \leq k$, $\diam(f(\Phi^{x,\zeta_i}\cdot U^{x,\zeta_i}))<\e$. This implies that $\diam\left(f\left(\bigcap_{i \leq k} \Phi^{x,\zeta_i} \cdot X \right) \right)<\e$. Indeed, let $(y,\eta) \in \bigcap_{i \leq k} \Phi^{x,\zeta_i} \times X$. Then $y \in \bigcap_{i \leq k} \Phi^{x,\zeta_i}$ and there is $j=1, \dots, k$ such that $\eta \in U^{x,\zeta_j}$. In particular, $y \in \Phi^{x,\zeta_j}$ and therefore $(y,\eta) \in \Phi^{x,\zeta_i}\times U^{x,\zeta_j}$. The latter assertion implies that $\bigcap_{i \leq k} \Phi^{x,\zeta_i} \times X \subseteq \Phi^{x,\zeta_i}\times U^{x,\zeta_j}$ for some $j=1, \dots, k$ and therefore $\diam\left(f\left(\bigcap_{i \leq k} \Phi^{x,\zeta_i} \cdot X\right) \right)<\e$.

Define $\Theta_x:=\bigcap_{i\leq k} \Phi^{x,\zeta_i}$. Since the sets $\Phi^N_i \times U_i^N$ came from a covering of $\{f(x\cdot \zeta) : x \in G, \zeta \in X\}$, clearly $\bigcup \bigcap_{i\leq k} \Phi^{x,\zeta_i}=G$ and the latter set is the countable union of finite intersections; therefore we can cover $G$ with countable many $\Theta_x$.

Let $g_1, g_2 \in \Theta_x$, therefore, for each $\zeta \in X$, since $\diam\left(f(\Theta_x \cdot X)\right)<\e$, we have that $|f(g_1\zeta)-f(g_2\zeta)|<\e$.

\begin{theorem}\label{Main}
For a definable group the following statements are equivalent:
\begin{enumerate}
    \item Every $\sigma$-continuous affine action of $G$ on a non-empty compact convex set in a locally convex vector space has a fixed point.
    \item $G$ admits a left-invariant mean over $\RUC'$.
    \item For every non-empty compact Hausdorff space $X$ and every $\sigma$-continuous action of $G$ on $X$, there is a $G$-invariant probability measure over $\Al(X)$, the $\sigma$-algebra generated by a basis of the topology of $X$.
\end{enumerate}
\end{theorem}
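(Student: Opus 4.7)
The plan is to establish the cycle $(1)\Rightarrow(2)\Rightarrow(3)\Rightarrow(1)$, mirroring the classical Day--Rickert proof of the equivalence between the fixed-point, invariant-mean, and invariant-measure characterisations of amenability, with topological ingredients replaced by the $\sigma$-topological analogues developed earlier in the paper.

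For $(1)\Rightarrow(2)$, I would apply the hypothesis to $K:=\text{Mean}(G)$ seen as a subset of $(\RUC)'$ with the weak-$\ast$ topology: Banach--Alaoglu makes $K$ a non-empty compact convex subset of a locally convex vector space. The canonical left action $(g\cdot F)(f):=F({}_g f)$ is affine, and by Theorem \ref{theorem:continuityofaction} it is $\sigma$-continuous when $K$ carries the $\sigma$-topology generated by the basic weak-$\ast$ sets $\mathcal{U}_{f,V}$. Hypothesis $(1)$ then delivers a fixed point, which is by definition a left-invariant mean in $\RUC'$.

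For $(2)\Rightarrow(3)$, let $G$ act $\sigma$-continuously on a compact Hausdorff space $X$ equipped with a basis $\mathcal B$, pick any $\zeta\in X$, and, for $f\in C_\sigma(X)$, set $\tilde f(g):=f(g\cdot\zeta)$. Theorem \ref{theorem:RUC} puts $\tilde f\in\RUC$, so $I(f):=m(\tilde f)$, with $m$ the invariant mean furnished by $(2)$, defines a bounded positive linear functional on $C_\sigma(X)$. Since finite intersections and countable unions of basic opens remain topologically open, every $\sigma$-open set is open, and so the $\sigma$-topology inherits countable compactness, Hausdorffness and the Lindel\"of property from the topology of $X$; Theorem \ref{theorem:Riesz} then produces a unique representing measure $\mu$ on $\Al(X)$. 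Invariance follows by checking that each translation $L_g:x\mapsto g\cdot x$ is $\sigma$-continuous on $X$ (the preimage of a basic open through the action, sliced at $\{g\}$, is $\sigma$-open in $X$) and that $\widetilde{f\circ L_g}(h)=f(gh\cdot\zeta)={}_g\tilde f(h)$; left-invariance of $m$ gives $I(f\circ L_g)=m({}_g\tilde f)=m(\tilde f)=I(f)$, and uniqueness in Riesz yields $(L_g)_{\ast}\mu=\mu$. Rescaling by $\mu(X)^{-1}$ produces the desired probability measure.

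For $(3)\Rightarrow(1)$, given an affine $\sigma$-continuous action of $G$ on a non-empty compact convex $K$ in a locally convex vector space $E$, I would choose a topological basis of $K$ refining the given $\sigma$-topology and apply $(3)$ to obtain a $G$-invariant probability measure $\mu$ on $\Al(K)$; compactness of $K$ in $E$ together with the $\sigma$-inner regularity of $\mu$ delivered by Theorem \ref{theorem:extensionTheorem} guarantee the existence of a barycenter $b(\mu)\in K$, and affinity of each $L_g$ combined with $(L_g)_{\ast}\mu=\mu$ gives $g\cdot b(\mu)=b((L_g)_{\ast}\mu)=b(\mu)$, so $b(\mu)$ is a fixed point. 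The hardest part I expect is the bookkeeping in $(2)\Rightarrow(3)$: carefully verifying that $(X,\mathcal B)$ meets the Lindel\"of, countable compactness and Hausdorff hypotheses of Theorem \ref{theorem:Riesz} in its $\sigma$-topology and that translations $L_g$ remain $\sigma$-continuous so that the invariance calculation is legitimate. A secondary technicality in $(3)\Rightarrow(1)$ is ensuring that the continuous linear functionals used to define the barycenter are $\Al(K)$-measurable; this can be handled by enlarging $\mathcal B$ to contain the preimages of open intervals under a separating family of continuous seminorms on $E$.
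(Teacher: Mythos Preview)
Your proposal is correct and follows the same cycle $(1)\Rightarrow(2)\Rightarrow(3)\Rightarrow(1)$ as the paper, invoking Theorems \ref{theorem:continuityofaction}, \ref{theorem:RUC}, and \ref{theorem:Riesz} at exactly the same junctures. The only cosmetic differences are that in $(2)\Rightarrow(3)$ no rescaling is needed (since $m(1)=1$ already forces $\mu(X)=1$), and in $(3)\Rightarrow(1)$ the paper constructs the barycenter explicitly as the limit of a net indexed by finite basic-open covers of $K$ (deferring to \cite{FixpunktAlf}) rather than invoking an abstract barycenter map as you do.
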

\begin{proof}

(1) $\Rightarrow$ (2): The set Mean$(G)$ of all means on $\RUC$ is a weak*-closed subset of the unit ball in $\RUC'$, which is compact in the weak*-topology by Banach-Alaoglu's Theorem. By Theorem \ref{theorem:continuityofaction} we know that the canonical action of $G$ in Mean$(G)$ is $\sigma$-continuous and it is clear that is affine. Therefore, the action admits a fixed point; but a fixed point of such action must be a left-invariant mean over $\RUC$.

(2) $\Rightarrow$ (3): Assume that there is a left-invariant mean over $\RUC$. Now, let $(X, \tau)$ be a compact Hausdorff space and let $\rho$ be a $\sigma$-topology over $X$ generated by a basis of $\tau$ which witnesses a $\sigma$-continuous action $\cdots$ from $G$ to $X$. Being $\tau$ a compact, Hausdorff topology over $X$, it follows that $\rho$ is a countably compact, Hausdorff, Lindel\"{o}f $\sigma$-topology.

Fix $x_0\in X$, for $f \in C_\sigma(X)$, define a function
\begin{align}
    F_f : G \rightarrow \R, \hspace{5mm} F_f(g):=f(g \cdot x_0).
\end{align}
By Theorem \ref{theorem:RUC} we know that $F_f$ is logically right uniformly continuous. Being $\rho$ a countably compact $\sigma$-topology over $X$ and $f \in C_\sigma(X)$ it is clear that $F_f$ is bounded. Let $a \in G$. Then
\begin{align*}
    F_{_{a}f}(g)=\mbox{}_{a}f(g \cdot x_0)=f(a \cdot (g \cdot x_0))=f(ag \cdot x_0)=F_f(ag)=\mbox{}_{a}F_f(g)
\end{align*}
for all $g \in G$. Let $m$ be a left invariant mean on $\RUC$. Define the mapping,
\begin{align}
    \varphi : C_\sigma(X) \rightarrow \R, \hspace{5mm} \varphi(f):=m(F_f).
\end{align}
Let $f \in C_\sigma(X)$, $\norm{f} = 1$. Then 
\[|\varphi(f)|=|m(F_f)| \leq \norm{m} \norm{F_f} = \sup_{g \in G} |f(g \cdot x_0)|=1.\] 

By definition of a mean, \[\inf_{x \in G} h(x) \leq m(h) \leq \sup_{x \in G} h(x)\] for every $h \in \RUC$. So if $f \geq 0$ and $f \in C_\sigma(X)$, then $\varphi(f) = m(F_f) \geq \inf_{x \in G} f(x \cdot x_0) \geq 0$. Linearity follows from linearity of $m$.

As $\varphi$ is a positive, bounded linear functional over $C_\sigma(X)$ and $X$ is a countably compact, Hausdorff, Lindel\"{o}f $\sigma$-space, by Riesz Representation Theorem (Theorem \ref{theorem:Riesz}) there is a $\sigma$-inner regular probability measure $\mu$ over the $\sigma$-algebra generated by the $\sigma$-open sets of $X$ such that, for every $f \in C_\sigma(X)$,
\begin{align}\label{eq:representation}
    \varphi(f)=\int_X f\dmu.
\end{align}
Since $m$ is left invariant, we have that
\begin{align*}
    \varphi(_{g}f)=m(F_{_{g}f})=m(_{g}F_f)=m(F_f)=\varphi(f)
\end{align*}
for every $f \in C(X)$ and $g \in G$. By the integral representation of $\varphi$ given in \eqref{eq:representation}, we know that $\mu$ is $G$-invariant.

\bigskip

For (3) $\Rightarrow$ (1), we need the following claim.
\begin{claim}
Let $X, Y$ be Hausdorff $\sigma$-spaces and $f : X \rightarrow Y$ be $\sigma$-continuous. Then for every converging net $(x_\alpha)_\alpha \rightarrow x$ in $X$ we have that $f(x_\alpha) \rightarrow f(x)$ in $Y$.
\end{claim}
\begin{proof}
Let $(x_\alpha)_\alpha$ be a convergent net to a point $x \in X$. As $f$ is continuous, for every $\sigma$-open set $U$ containing $f(x)$, we have that $f^{-1}(U)$ is $\sigma$-open in $X$. As the net $(x_\alpha)_\alpha$ converges to $x$ and $f^{-1}(U)$ is an open set containing $x$, by convergence of $(x_\alpha)_\alpha$ we have that $x_\alpha$ is eventually in $f^{-1}(U)$. Therefore $f(x_\alpha)$ is eventually in $f(f^{-1}(U)) \subseteq U$. Thus $f(x_\alpha) \rightarrow f(x)$.
\end{proof}

Now (3) $\Rightarrow$ (1) follows the same proof of the analogous results in \cite{FixpunktAlf} or \cite{Wagon}. We refer to the reader to the proof of Theorem 3.5 in  \cite{FixpunktAlf}, although we will mention the changes that we need for the proof to work in the $\sigma$-continuous case. Assume (3) and let $\cdot$ be a $\sigma$-continuous action on a non-empty compact set in a locally convex vector space $L$. So, first we will take $F=\{U_1, \dots, U_n\}$ a minimal covering of $C$ by basic open sets in $L$. Define $S_i:=U_i \setminus \left(\bigcup_{j=1}^{i-1}U_j\right) \in \Al(X)$ and let $\mu_{U_i}:=\mu(S_i)$, for each $i=1, \dots, n$, so that in particular $\sum_{i \leq n}\mu_{U_i}=1$. As the elements in the sets $F \in D$ are basic open sets and therefore open sets in the topology of $L$, the Claims 3.6, 3.7, and 3.8 in \cite{FixpunktAlf} remain true. Finally, note that as the $\sigma$ topology of $C$ is contained in the topology of $C$, the convergence of the net in the topology of $C$ implies the convergence of the net in the $\sigma$-topology. As the sets in the elements of $D$ are basic open sets and $\mu$ is $G$-invariant, we get a fixed point using the continuity of $\cdot$ and using the previous claim.
\end{proof}

\bibliographystyle{amsplain}

\end{document}